\newtheorem{theorem}{Theorem}[section]
\newtheorem{lemma}[theorem]{Lemma}
\newtheorem{corollary}[theorem]{Corollary}
\theoremstyle{definition}
\newtheorem{definition}[theorem]{Definition}
\newtheorem{example}[theorem]{Example}
\theoremstyle{remark}
\newtheorem{remark}[theorem]{Remark}
\numberwithin{equation}{section}
\newbox\pullbackbox
\def\pullback{\copy\pullbackbox}
\let\c@proposition\c@theorem
\let\c@corollary\c@theorem
\let\c@lemma\c@theorem
\let\c@definition\c@theorem
\let\c@example\c@theorem
\let\c@remark\c@theorem
\def\ot{\otimes}
\begin{document}

\title{A torsion theory in the category of cocommutative Hopf algebras}

\author{Marino Gran}
\author{Gabriel Kadjo}
\address[Marino Gran, Gabriel Kadjo]{Institut de Recherche en Mathématique et Physique, Université catholique de Louvain, Chemin du Cyclotron 2, 1348 Louvain-la-Neuve, Belgium}
\email{marino.gran@uclouvain.be}          
\email{gabriel.kadjo@uclouvain.be}       

\author{Joost Vercruysse}
\address[Joost Vercruysse]{D\'epartement de Math\'ematique, Universit\'e Libre de Bruxelles, Boulevard du Triomphe, 1050 Bruxelles, Belgium.}
\email{jvercruy@ulb.ac.be}

\keywords{semi-abelian category, torsion theory, cocommutative Hopf algebra}

\subjclass[2010]{18E40, 18E10, 20J99, 16T05, 16S40}

\date{\today}

\maketitle

\begin{abstract}
The purpose of this article is to prove that the category of cocommutative Hopf $K$-algebras, over a field $K$ of characteristic zero, is a semi-abelian category. Moreover, we show that this category is action representable, and that it contains a torsion theory whose torsion-free and torsion parts are given by the category of groups and by the category of Lie $K$-algebras, respectively.
\end{abstract}

\section{Introduction}
\label{intro}
The starting point of this article on Hopf algebras is a well-known result due to A. Grothendieck, as outlined in \cite{Sweedler}, saying that the category of finite-dimensional, commutative and cocommutative Hopf $K$-algebras over a field $K$ is abelian. This result was extended by M. Takeuchi to the category of commutative and cocommutative Hopf $K$-algebras, not necessarily finite-dimensional \cite{Takeuchi}. 
The category $\mathbf{Hopf}_{K,coc}$ of cocommutative Hopf $K$-algebras is not additive, thus it can not be abelian. In the present article we investigate some of its fundamental exactness properties, showing that it is a homological category (Section \ref{homological}), and that it is Barr-exact (Section \ref{exact}), leading to the conclusion that the category $\mathbf{Hopf}_{K,coc}$ is semi-abelian \cite{Janelidze2002367} when the field $K$ is of characteristic zero (Theorem \ref{semi-abelian}). This result establishes a new link between the theory of Hopf algebras and the more recent one of semi-abelian categories, both of which can be viewed as wide generalizations of group theory.
Since a category $\mathbf{C}$ is abelian if and only if both $\mathbf{C}$ and its dual $\mathbf{C}^{op}$ are semi-abelian, this observation can be seen as a ``non-commutative'' version of Takeuchi's theorem mentioned above. The fact that the category $\mathbf{Hopf}_{K,coc}$ is semi-abelian was independently obtained by Clemens Berger and Stephen Lack. A recent article of Christine Vespa and Marc Wambst shows that the abelian core of $\mathbf{Hopf}_{K,coc}$ is the category of commutative and cocommutative Hopf $K$-algebras \cite{Vespa}.

 In the present work we also prove the existence of a non-abelian torsion theory $(\mathbf{T},\mathbf{F})$ in $\mathbf{Hopf}_{K,coc}$ (Theorem \ref{hereditary torsion}), where the torsion subcategory $\mathbf{T}$ is the category of \emph{primitive Hopf K-algebras}, which is isomorphic to the category of Lie $K$-algebras, and the torsion-free subcategory 
$\mathbf{F}$ is the category of \emph{group Hopf K-algebras}, which is isomorphic to the category of groups.

The categories of groups and of Lie $K$-algebras are two typical examples of semi-abelian categories: this shows again that the theories of cocommutative Hopf algebras and of semi-abelian categories are strongly intertwined.
The category $\mathbf{Hopf}_{K,coc}$ inherits 
some fundamental exactness properties from groups and Lie algebras thanks to the well-known canonical decomposition of a cocommutative Hopf algebra into a semi-direct product of a group Hopf algebra and a primitive Hopf algebra (a result associated with the names Cartier-Gabriel-Kostant-Milnor-Moore).
The present work opens the way to some new applications of categorical Galois theory \cite{Janelidze} in the category of cocommutative Hopf $K$-algebras, since the reflection from this category to the torsion-free subcategory of group Hopf algebras enjoys all the properties needed for this kind of investigations, as we briefly explain in Section \ref{torsion}. We conclude the article by observing that the semi-abelian category $\mathbf{Hopf}_{K,coc}$ is also an action representable category \cite{BJK} (Corollary \ref{representable object actions}).

\section{Preliminaries}

\subsection{Semi-abelian categories.\\}

Semi-abelian categories \cite{Janelidze2002367} are finitely complete, pointed, exact in the sense of M. Barr \cite{Barr}, protomodular in the sense of D. Bourn \cite{protomodularity}, with finite coproducts.
These categories have been introduced to capture some typical algebraic properties valid for non-abelian algebraic structures such as groups, Lie algebras, rings, crossed modules, varieties of $\Omega$-groups in the sense of P. Higgins \cite{Higgins01071956} and compact groups. As already mentioned in the introduction, every abelian category is in particular semi-abelian.

 Although protomodularity is a property that can be expressed in any category with finite limits, in the pointed context, i.e. when there is a zero object $0$ in $\mathbf{C}$, protomodularity amounts to the fact that the following formulation of the \emph{Split Short Five Lemma} holds:
 given a commutative diagram
 \[\xymatrixcolsep{5pc}\xymatrix{
0 \ar[r]^-{} &  K \ar[r]^-{k} \ar[d]_-{\kappa} & A \ar@<0.5ex>[r]^-{f} \ar[d]_-{\alpha} & B \ar[d]^-{\beta} \ar@<0.5ex>[l]^-{s} \\
0 \ar[r]^-{} &  K' \ar[r]_-{k'} & A' \ar@<0.5ex>[r]^-{f'} & B' \ar@<0.5ex>[l]^-{s'} }\]

\begin{flushleft}
where $k=ker(f)$, $k'=ker(f')$, $f\circ s = 1_{B}$, and $f'\circ s' = 1_{B'}$ (i.e. $f$ and $f'$ are split epimorphisms with sections $s$ and $s'$), if both $\kappa$ and $\beta$ are isomorphisms, then so is $\alpha$. 
\end{flushleft}

%A useful lemma which holds in protomodular categories is the following (\cite{protomodularity}, Proposition $11$):
%\begin{lemma}\label{jointlyepic}
%Given a split short exact sequence in a pointed protomodular category
% \[\xymatrixcolsep{5pc}\xymatrix{
%0 \ar[r]^-{} &  K \ar[r]^-{k}  & A \ar@<0.5ex>[r]^-{f} & B  \ar@<0.5ex>[l]^-{s} \ar[r] & 0 }\]
%\begin{flushleft}
%the pair of morphisms $(k,s)$ is jointly epimorphic.
%\end{flushleft}
%\end{lemma}

Any protomodular category $\mathbf{C}$ is a \textit{Mal'tsev} category \cite{CLP}: this means that every (internal) reflexive relation  $\mathbf{C}$ is an (internal) equivalence relation. Recall that a reflexive relation on an object $X$ is a diagram of the form
\begin{equation}\label{reflexive}
\xymatrix{
 R \ar@<1ex>[r]^-{p_{1}} \ar@<-1ex>[r]_-{p_{2}} &  X \ar[l]|-{\delta},  
  }
\end{equation}
 where $p_1$ and $p_2$ are jointly monic, and $p_{1}\circ \delta = 1_{X} = p_{2}\circ \delta$; such a reflexive relation $R$ is an equivalence relation  when, moreover, there exist $\sigma: R \longrightarrow R$ and $\tau : R \times_{X} R \longrightarrow R$ as in the following diagram
\[\xymatrixcolsep{6pc}
\xymatrix{
R \times_{X} R \ar[r]^-{\tau} &  R \ar@(ur,ul)_-{\sigma} \ar@<1ex>[r]^-{p_{1}} \ar@<-1ex>[r]_-{p_{2}} &  X \ar[l]|-{\delta}  
  }\]
  such that $p_{1}\circ \sigma = p_{2}$ and $p_{2}\circ \sigma = p_{1}$ (symmetry), and $p_{1} \circ \tau = p_{1}\circ \pi_{1}$ and $p_{2} \circ \tau = p_{2}\circ \pi_{2}$ (transitivity), where $\pi_{1}$ and $\pi_{2}$ are the projections in the following pullback:
    
\[\xymatrix{
R \times_{X} R \ar[r]^-{\pi_{2}} \ar[d]_-{\pi_{1}}   \ar@{}[rd]|<<<{\pullback}  & R \ar[d]^-{p_{1}}\\
R \ar[r]_-{p_{2}} & X
}\]

In the present article, by a \textit{regular} category is meant a finitely complete category where every morphism can be factorized as a regular epimorphism followed by a monomorphism, and where regular epimorphisms are pullback stable. A regular category $\mathbf C$ is said to be \textit{Barr-exact} if, moreover, every equivalence relation is effective, i.e. every equivalence relation is the kernel pair of a morphism in $\mathbf C$. A category which is pointed, protomodular and regular is said to be \textit{homological} \cite{BorceuxBourn}. In this context several basic diagram lemmas of homological algebra hold true (such as the snake lemma, the $3$-by-$3$-Lemma, etc.). 

We end these preliminaries with the following diagram indicating some implications between the different contexts recalled above:
\[\xymatrixrowsep{1pc}\xymatrix{
  & \text{semi-abelian} \ar[ld] \ar[rd] & \\
\text{homological} \ar[d] \ar[rrd] & & \text{Barr-exact} \ar[d]\\
  \text{protomodular} \ar[d] & & \text{regular} \\
  \text{Mal'tsev} & & 
  }\] 

\subsection{The category $\mathbf{Hopf}_{K,coc}$ of cocommutative Hopf $K$-algebras.\\}

The category we study in this article is the category of Hopf $K$-algebras over a field $K$, denoted by $\mathbf{Hopf}_{K}$. The objects in $\mathbf{Hopf}_{K}$ are Hopf $K$-algebras, i.e. sextuples $(H,M,u,\Delta,\epsilon,S)$ where $(H,M,u)$ is a $K$-algebra and $(H,\Delta,\epsilon)$ is a $K$-coalgebra, such that these two structures are compatible, i.e. maps $M$ and $u$ are $K$-coalgebras morphisms, making $(H,M,u,\Delta,\epsilon)$ a $K$-bialgebra. The linear map $S$ is called the \textit{antipode}, and makes the following diagram commute:

\[\xymatrix{
H \ar[rrd]_-{\epsilon} \ar[r]^-{\Delta} & H\otimes H \ar@<0.5ex>[rr]^-{S \otimes id} \ar@<-0.5ex>[rr]_-{id \otimes S} & &  H\otimes H \ar[r]^-{M}  & H  \\
& &K \ar[rru]_-{u}& & 
  }\]
\vspace*{0.2cm}
  
Morphisms in $\mathbf{Hopf}_{K}$ are exactly morphisms of $K$-bialgebras (i.e. morphisms that are both morphisms of $K$-algebras and $K$-coalgebras), as morphisms of $K$-bialgebras always preserve antipodes.

To denote the comultiplication map of a Hopf algebra $H$, we will use the Sweedler notation: $\forall h\in H$, $\Delta(h) = h_{1} \otimes h_{2}$ by omitting the summation sign. A Hopf algebra $H$ is said to be \textit{cocommutative} if its comultiplication map $\Delta$ makes the following diagram commute, where $\sigma$ is the linear map such that $\sigma(x\otimes y)= y\otimes x$, $\forall x,y \in H$

\[\xymatrix{
& H \otimes H \ar[rd]^-{\sigma}& \\
H\ar[rr]_-{\Delta} \ar[ru]^-{\Delta} & & H \otimes H
  }\] 

The category of cocommutative Hopf $K$-algebras will be denoted by $\mathbf{Hopf}_{K,coc}$. 
In the category $\mathbf{Hopf}_{K,coc}$ there are two full subcategories which will be of importance for our work: the category $\mathbf{GrpHopf}_{K}$ of group Hopf algebras, and the category $\mathbf{PrimHopf}_{K}$ of primitive Hopf algebras, whose definitions we are now going to recall.

\begin{enumerate}
\item

The \textit{group Hopf algebra on a group G}, denoted by $K[G]$, is the free vector space on $G$ over the field $K$, i.e.\ $K[G] = \{\sum_{g\in G} \alpha_{g}g$, where $(\alpha_{g})_{g\in G}$ is a family of scalars with only a finite number being non zero$\}$ and $\{g\mid g\in G\}$ is a basis of $K[G]$. The group Hopf algebra $K[G]$ can be equipped with a structure of cocommutative Hopf algebra, by taking the multiplication induced by the group law, and comultiplication $\Delta: K[G] \longrightarrow K[G] \otimes K[G]$, counit $\epsilon:K[G]\to K$ and antipode $S:K[G]\to K[G]$ the linear maps defined on the base elements respectively by $\Delta(g) = g\otimes g$,  $\epsilon(g)=1$ and $S(g)= g^{-1}$, $\forall g \in G$.

This assignment defines a functor $K[-] \colon \mathbf{Grp} \rightarrow \mathbf{Hopf}_{K}$ from the category of groups to the category of Hopf algebras, which has a right adjoint $\mathcal G \colon \mathbf{Hopf}_{K} \rightarrow \mathbf{Grp} $, that associates to any Hopf algebra $H$ its group of grouplike elements ${\mathcal G}(H)=\{x\in H~|~\Delta(x)=x\otimes x, \epsilon(x)=1\}$. We can now consider the category $\mathbf{GrpHopf}_{K}$, which is the full subcategory of $\mathbf{Hopf}_{K}$ whose objects are group Hopf algebras, that is Hopf algebras generated as algebra by grouplike elements. The functor $K[-] \colon \mathbf{Grp} \rightarrow \mathbf{GrpHopf}_{K}$ is an isomorphism of categories.
Indeed, it is clear that $K[-] \circ {\mathcal G}$ is the identity functor on the category of group Hopf algebras, and let us recall why the functor ${\mathcal G} \circ K[-]$ is the identity functor on the category of groups. If $G$ is a group and $K[G]$ the group Hopf K-algebra on $G$, one clearly has that $G \subseteq \mathcal{G} \big(K[G]\big)$ (in fact, this inclusion is the unit of the adjunction described above). Conversely, let $x= \sum_{g\in G} \alpha_{g}g\in K[G]$ be a group-like element. We have $\Delta(x) = \sum_{g\in G} \alpha_{g}g\otimes g$ but also $\Delta(x) = x\otimes x=\sum_{g,h\in G}\alpha_g\alpha_h g\otimes h$. Since $\{g\otimes h ~|~g,h\in G\}$ forms a basis of $K[G]\otimes K[G]$, we have $\forall g,h \in G$: $\alpha_g^2=\alpha_g$, and $\alpha_g\alpha_h=0$ if $g\neq h$. It follows that all $\alpha_g$'s should be zero except one that should be $1_K$, thus $x$ is in $G$.

\item The \textit{universal enveloping algebra of a Lie algebra L}, denoted by $U(L)$, is defined by the quotient $U(L)=T(L)/I$, where $T(L)$ is the tensor algebra on the vector space underlying $L$, and $I$ is the two-sided ideal of $T(L)$ generated by the elements of the form $x\otimes x' - x'\otimes x - [x,x']$, $\forall x,x' \in L$. Remark that the elements of $L$ generate $U(L)$ as an algebra. The universal enveloping algebra $U(L)$ can be equipped with a structure of cocommutative (and non commutative) Hopf algebra, by taking the concatenation as multiplication, and comultiplication $\Delta: U(L) \longrightarrow U(L) \otimes U(L)$, counit $\epsilon:U(L)\to K$ and antipode $S:U(L)\to U(L)$ the algebra maps defined on the generators by $\Delta(x) = x\otimes 1 + 1\otimes x$, $\epsilon(x)=0$ and $S(x)=-x$, 
$\forall x \in L$.
 
Recall that for any Hopf algebra $H$ an element $x\in H$ is called a \textit{primitive element} if $\Delta(x) = x\otimes 1 + 1\otimes x$ (and consequently, $\epsilon(x)=0$).
The above constructions lead to a pair of adjoint functors, where the functor $U:\mathbf{LieAlg}_{K}\to \mathbf{Hopf}_{K}$ is a left adjoint to $P:\mathbf{Hopf}_{K}\to \mathbf{LieAlg}_{K}$.
We can now consider the category $\mathbf{PrimHopf}_{K}$, which is the full subcategory of $\mathbf{Hopf}_{K}$
whose objects are primitive Hopf algebras, that is Hopf algebras generated as algebra by primitive elements. In the case where $K$ is of characteristic $0$, the category $\mathbf{PrimHopf}_{K}$ is known to be isomorphic to the category $\mathbf{LieAlg}_{K}$ of Lie $K$-algebras \cite[Theorem 5.18]{Milnor}.
\end{enumerate}
\vspace*{0.3cm}
\begin{remark}
\textsf{
As can be seen from the formula for comultiplication, both group Hopf algebras and primitive Hopf algebras are cocommutative. Therefore the categories $\mathbf{GrpHopf}_{K}$ and $\mathbf{PrimHopf}_{K}$ are also full subcategories of $\mathbf{Hopf}_{K,coc}$.
The functors $U, P, K[-], \mathcal{G}$ and their adjunctions are represented in the following diagram:
\[\xymatrixcolsep{6pc}
\xymatrix{
\mathbf{Grp} \ar@<1ex>[r]^-{K[-]}_-{\perp} & \mathbf{Hopf}_{K,coc} \ar@<1ex>[r]^-{P}_-{\top} \ar@<1ex>[l]^-{\mathcal G} & \mathbf{LieAlg}_{K}  \ar@<1ex>[l]^-{U} 
}\]
}
\end{remark}

\section{The category of cocommutative Hopf algebras over a field of characteristic zero is homological}
\label{homological}

The category $\mathbf{Hopf}_{K,coc}$ is certainly pointed, with the zero object $K$, that will be denoted by $0$, from now on. $\mathbf{Hopf}_{K,coc}$ is complete and cocomplete, since it is locally presentable \cite{Porst1}. We will now establish its protomodularity and regularity.

\subsection{Protomodularity of the category $\mathbf{Hopf}_{K}$}\label{protomodular}

Let us consider the following commutative diagram of short exact sequences in the category $\mathbf{Hopf}_{K,coc}$:

\begin{equation}\label{diagram}
\xymatrixcolsep{5pc} \xymatrix{
0 \ar[r]^-{} &  A \ar[r]^-{} \ar[d]_-{id_{A}} & C_{1} \ar[r]^-{} \ar[d]^-{\theta} & B \ar[d]^-{id_{B}} \ar[r]^-{} & 0 \\
0 \ar[r]^-{} &  A \ar[r]_-{} & C_{2} \ar[r]_-{} & B \ar[r]^-{} & 0 }
\end{equation}

Thanks to the explicit descriptions of equalizers and coequalizers given in \cite{Andru1} one can easily prove that the kernel and the cokernel of $\theta$ are the zero object. This proves that $\theta$ is a monomorphism and an epimorphism of Hopf $K$-algebras. Since monomorphisms are injections and epimorphisms are surjections in the category $\mathbf{Hopf}_{K,coc}$ \cite{Chirva1,Nasta1}, this shows that $\theta$ is an isomorphism of Hopf $K$-algebras and so the category $\mathbf{Hopf}_{K,coc}$ is protomodular.

The category $\mathbf{Hopf}_{K,coc}$ is actually even \emph{strongly protomodular} \cite{strong}, since it has finite limits and it can be viewed as the category of internal groups in the category of cocommutative $K$-coalgebras (see \cite{Street}, for instance). 

%In fact, returning to the case of a base field $K$, the protomodularity holds more generally for the category $\mathbf{Hopf}_{K}$ of arbitrary Hopf $K$-algebras. This follows from the following result (Lemma 3.2.19 in \cite{Andru1}) by taking into account the fact that any split extension of Hopf algebras induces a \emph{cleft exact sequence} in the sense of by Andruskiewitsch and Devoto \cite{Andru1}:
%
%
%
%\begin{theorem}
%Consider a diagram of the form \eqref{diagram} in the category of Hopf $K$-algebras, where the above exact sequence is cleft. Then the bottom exact sequence is also cleft and $\theta$ is an isomorphism.
%\end{theorem}

\subsection{Semi-direct products of cocommutative Hopf algebras}

Let $B$ be a cocommutative Hopf algebra. A {\em $B$-module Hopf algebra} is a Hopf algebra $A$ that is at the same time a left $B$-module with action $\rho :B\ot A\to A, \rho(b\ot a)=b\cdot a$ such that $\rho$ is a morphism of bialgebras. The semi-direct product (also known as {\em smash product}) of $B$ and $A$, denoted by $A \rtimes B$, is the Hopf algebra whose underlying vector space is the tensor product $A\ot B$ and with the following structure: the unit is $u_{A\rtimes B}=u_A\ot u_B$ and multiplication given by
$$(a\ot b)(a'\ot b')=a(b_1\cdot a')\ot b_2b',$$
for all $a,a'\in A$ and $b,b'\in B$. The coalgebra structure is given by the tensor product coalgebra, i.e. $\Delta_{A\rtimes B}= (id_A \otimes \sigma \otimes id_{B})(\Delta_{A} \otimes \Delta_{B})$ and $\epsilon_{A\rtimes B}=\epsilon_A\ot \epsilon_B$. The antipode is given by 
$S_{A\rtimes B}(a\ot b) = S_B(b_1)\cdot S_A(a)\ot S_B(b_2)$.\\

The following Lemma is a reformulation of Theorem $4.1$ in \cite{Molnar}:
\begin{lemma}\label{Molnar}
Every split short exact sequence in $\mathbf{Hopf}_{K,coc}$ 
 \[\xymatrixcolsep{5pc}\xymatrix{
0 \ar[r]^-{} &  A \ar[r]^-{k}  & H \ar@<0.5ex>[r]^-{p} & B  \ar@<0.5ex>[l]^-{s} \ar[r] & 0 }\]
is canonically isomorphic to the semi-direct product exact sequence
\[\xymatrixcolsep{5pc}\xymatrix{
0 \ar[r]^-{} &  A \ar[r]^-{i_1}  & A \rtimes B \ar@<0.5ex>[r]^-{p_2} & B  \ar@<0.5ex>[l]^-{i_2} \ar[r] & 0 }\]
where $i_1=id_A\ot u_B$, $i_2=u_A\ot id_B$ and $p_2=\epsilon_A\ot id_B$.
\end{lemma}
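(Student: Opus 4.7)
The plan is to construct an explicit comparison morphism $\phi: A \rtimes B \to H$ from a suitable $B$-module Hopf algebra structure on $A$, and then invoke protomodularity (already established in Section \ref{protomodular}) to promote $\phi$ to an isomorphism. First I would build the action $\rho: B\ot A\to A$ by conjugation through the section: for $b\in B$, $a\in A$, set $b\cdot a := s(b_1)\, k(a)\, S(s(b_2))$, viewed a priori as an element of $H$. Using $p\circ s = id_B$, $p\circ k = u_B\epsilon_A$, and the antipode axiom applied in $B$, one checks that $p\bigl(s(b_1) k(a) S(s(b_2))\bigr) = \epsilon_A(a)\,\epsilon_B(b)\,1_B$, so the element lies in the Hopf kernel of $p$. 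Since $k$ is the kernel of $p$ in $\mathbf{Hopf}_{K,coc}$, this uniquely factors through $k$ and yields the required action $\rho$, and the bialgebra axioms together with cocommutativity of $B$ immediately make $(A,\rho)$ a $B$-module Hopf algebra.

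Next I would define $\phi : A \rtimes B \to H$ by $\phi(a\ot b) := k(a)\, s(b)$. Coalgebra compatibility follows at once from the fact that $\Delta_{A\rtimes B}$ is the tensor-product coalgebra and that $k,s$ are coalgebra morphisms. Commutativity with the boundary of the diagram is direct: $\phi\circ i_1 = k$ because $s(u_B)=u_H$, and $p\circ\phi = p_2$ because $p\circ k = u_B \epsilon_A$ and $p\circ s = id_B$. The technical heart of the argument is checking that $\phi$ preserves multiplication. Using the definition of the action, the key identity to verify is
\[
s(b)\, k(a') = k(b_1\cdot a')\, s(b_2),
\]
which follows from inserting $1_H = s(S_B(b_2))\,s(b_3)$ between $s(b_1)$ and $k(a')$. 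Once this is in hand, direct expansion of $\phi(a\ot b)\phi(a'\ot b') = k(a)s(b)k(a')s(b')$ against the multiplication formula of $A \rtimes B$ yields multiplicativity of $\phi$.

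Having produced $\phi$ as a morphism of split short exact sequences whose outer components are identities, the \textit{Split Short Five Lemma} (equivalent to protomodularity, which is valid in $\mathbf{Hopf}_{K,coc}$ by Section \ref{protomodular}) forces $\phi$ to be an isomorphism. If a concrete inverse is desired, the formula $\psi(h) := h_1 S\bigl(s(p(h_2))\bigr)\ot p(h_3)$ works: cocommutativity ensures $h_1 S(s(p(h_2)))$ lies in (the image of) $A$, and the identities $\phi\psi = id_H$, $\psi\phi = id_{A\rtimes B}$ reduce to the counit and antipode axioms. The main obstacle is simply the careful Sweedler bookkeeping in verifying multiplicativity of $\phi$; but cocommutativity of both $H$ and $B$ keeps these manipulations clean, and the rest of the lemma is formal.
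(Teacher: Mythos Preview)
Your proposal is correct and follows essentially the same approach as the paper: define the comparison map $\phi(a\otimes b)=k(a)\,s(b)$ and then invoke protomodularity (the Split Short Five Lemma) to conclude it is an isomorphism. The only difference is one of detail: the paper simply asserts that $\phi$ is a morphism of split short exact sequences, deferring the construction of the $B$-action on $A$ and the verification of multiplicativity to Molnar's Theorem~4.1, whereas you spell out the conjugation action and the key identity $s(b)k(a')=k(b_1\cdot a')s(b_2)$ explicitly, and even supply the inverse $\psi$.
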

\begin{proof}
The arrow $ h \colon A \rtimes B \rightarrow H$ in the diagram below is given by $h(a\otimes b)= k(a) s(b)$ for all $a\otimes b \in A \rtimes B$.
 \[\xymatrixcolsep{5pc}\xymatrix{
0 \ar[r]^-{} &  A \ar[r]^-{i_1}  & A \rtimes B \ar@<0.5ex>[r]^-{p_2} \ar[d]^h & B  \ar@<0.5ex>[l]^-{i_2} \ar[r] & 0 \\
0 \ar[r]^-{} &  A \ar@{=}[u] \ar[r]^-{k}  & H \ar@<0.5ex>[r]^-{p} & B  \ar@<0.5ex>[l]^-{s} \ar[r]\ar@{=}[u] & 0 }
\]
This is a morphism of split short exact sequences, and therefore $h$ is an isomorphism by protomodularity of $\mathbf{Hopf}_{K,coc}$.
\end{proof}

We use this lemma to reformulate the well-known structure theorem for cocommutative Hopf algebras over an algebraically closed field of characteristic zero (see for instance \cite{Sweedler}, page 279 in combination with Lemma 8.0.1(c)) in terms of split exact sequences.

\begin{theorem}[Cartier-Gabriel-Moore-Milnor-Kostant]\label{MM} Every cocommutative Hopf K-algebra $H$, over an algebraically closed field $K$ of characteristic $0$, is isomorphic to the semi-direct product $$H \cong U(L_{H})\rtimes K[G_{H}]$$ of the universal enveloping algebra of a Lie algebra $U(L_{H})$ with the group Hopf algebra $K[G_{H}]$, where $L_{H}$ and $G_{H}$ are given respectively by the space of primitive elements and the group of group-like elements of $H$. Consequently, with notations of Lemma \ref{Molnar}, for each $H$ there exists a canonical split exact sequence of cocommutative Hopf algebras of the following form
 \[\xymatrix{
0 \ar[r]^-{} & U(L_{H})  \ar[r]^-{i_{H}} & H \ar@<0.5ex>[r]^-{p_{H}} & K[G_{H}]  \ar@<0.5ex>[l]^-{s_{H}} \ar[r]^-{} & 0 }\]
where $i_{H} = h \circ i_{1}$, $s_{H} = h \circ i_{2}$ and $p_{H} = p_{2} \circ h^{-1}$.
\end{theorem}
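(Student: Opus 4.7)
The first, and essentially the only substantive, step is to invoke the classical Cartier--Gabriel--Kostant--Milnor--Moore structure theorem (as stated in Sweedler, page 279, together with Lemma 8.0.1(c)). Under the hypotheses that $K$ is algebraically closed of characteristic zero, every cocommutative Hopf $K$-algebra $H$ is pointed, so its coradical is the group algebra $K[G_H]$ of its grouplike elements; the Cartier--Gabriel--Kostant part then realises $H$ as the smash product of the irreducible component of $1$ with $K[G_H]$, while the Milnor--Moore theorem identifies that irreducible component with the universal enveloping algebra $U(L_H)=U(P(H))$ of the Lie algebra of primitive elements. The left action of $K[G_H]$ on $U(L_H)$ making this smash product work is the restriction of the adjoint (conjugation) action of $H$ on itself, and the resulting isomorphism $h^{-1}:H\to U(L_H)\rtimes K[G_H]$ is a morphism of Hopf algebras because the adjoint action is compatible with the comultiplication in the cocommutative case.

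Given this isomorphism $h:U(L_{H})\rtimes K[G_{H}]\to H$, the second step is to observe that the semi-direct product always carries its canonical split short exact sequence
\[
\xymatrix@C=3pc{
0 \ar[r] & U(L_{H}) \ar[r]^-{i_{1}} & U(L_{H})\rtimes K[G_{H}] \ar@<0.5ex>[r]^-{p_{2}} & K[G_{H}] \ar@<0.5ex>[l]^-{i_{2}} \ar[r] & 0,
}
\]
with $i_{1}=id_{U(L_H)}\ot u_{K[G_H]}$, $i_{2}=u_{U(L_H)}\ot id_{K[G_H]}$ and $p_{2}=\epsilon_{U(L_H)}\ot id_{K[G_H]}$; each of these is manifestly a morphism in $\mathbf{Hopf}_{K,coc}$ by the explicit description of the smash product structure. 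Transporting this sequence along $h$ and setting $i_{H}=h\circ i_{1}$, $s_{H}=h\circ i_{2}$, $p_{H}=p_{2}\circ h^{-1}$, one obtains the desired canonical split exact sequence attached to $H$.

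Finally, to justify the word \emph{canonical}, I would note that $L_H=P(H)$ and $G_H=\mathcal{G}(H)$ are intrinsically defined by $H$, and that any other split short exact sequence with kernel $U(L_H)$, cokernel $K[G_H]$, and the obvious inclusions would, via Lemma \ref{Molnar} together with protomodularity of $\mathbf{Hopf}_{K,coc}$ (Subsection \ref{protomodular}), be uniquely isomorphic to the one just constructed. The only real difficulty, and the main obstacle to a self-contained argument, lies in the first step: unpacking and verifying the classical structure theorem, namely that the multiplication map $U(L_H)\ot K[G_H]\to H$ is bijective and intertwines the coalgebra structures with the tensor-product coalgebra, and that conjugation by grouplikes stabilises $U(L_H)$ so as to give it a genuine $K[G_H]$-module Hopf algebra structure. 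Once this is granted, the remainder of the argument is purely formal, being a direct application of Lemma \ref{Molnar}.
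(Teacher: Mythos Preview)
Your proposal is correct and matches the paper's own treatment: the paper does not give an independent proof of this theorem but simply cites the classical structure theorem (Sweedler, p.~279 together with Lemma 8.0.1(c)) and then observes that the split exact sequence formulation is an immediate consequence of Lemma~\ref{Molnar}. Your write-up unpacks a bit more of what the classical theorem actually asserts and why the transport along $h$ yields the stated sequence, but the approach is identical.
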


\begin{remark}\label{morphism}
Every morphism $f: H_{1} \longrightarrow H_{2}$ of cocommutative Hopf $K$-algebras gives rise to a morphism of split exact sequences of following form

\[\xymatrixcolsep{5pc}\xymatrix{
0 \ar[r]^-{} &  U(L_{H_{1}}) \ar[r]_-{i_{H_1}} \ar[d]_-{f_{1}:=U(P(f))} & H_1 \ar[r]_-{p_{H_1}} \ar[d]_-{f}  & K[G_{H_{1}}] \ar[d]^-{f_{2}:=K[\mathcal G(f)]} \ar[r]^-{} \ar@/_/[l]_-{s_{H_1}}& 0 \\
0 \ar[r]^-{} &  U(L_{H_{2}}) \ar[r]_-{i_{H_2}} & H_2 \ar[r]_-{p_{H_2}}   & K[G_{H_{2}}] \ar[r]^-{} \ar@/_/[l]_-{s_{H_2}} & 0 }\]
\end{remark}

\subsection{Regularity of the category $\mathbf{Hopf}_{K,coc}$}

\subsubsection{The regular epimorphism/monomorphism factorization in $\mathbf{Hopf}_{K,coc}$}

Let $f: A \longrightarrow B$ be a morphism of cocommutative Hopf $K$-algebras. By the protomodularity of $\mathbf{Hopf}_{K,coc}$, it is well known that regular epimorphisms are the same as cokernels, i.e. normal epimorphisms. Thus, to construct the regular epimorphism/monomorphism factorization of the morphism $f$, we  consider the kernel $i:Hker(f)\to A$ of $f$ and the cokernel $p:A\to HCoker (i)$ of $i$, both computed in the category of $\mathbf{Hopf}_{K,coc}$ :
%
% the cokernel $p$ of the kernel of $f$, then we have the following commutative diagram:

 \[\xymatrixcolsep{8pc}\xymatrixrowsep{1pc}\xymatrix{
   Hker(f) \ar@{^{(}->}[r]^-{i}& A \ar[r]^-{f} \ar@{->>}[d]_-{p} & B \\
   & HCoker (i) \ar@{>->}[ur]_-{m} &  
  }\]

The existence of this factorization $m$ such that $m\circ p=f$ follows from the universal property of the cokernel $p$ of $i$. It remains to prove that $m$ is a monomorphism, which is equivalent in $\mathbf{Hopf}_{K,coc}$ to showing that $m$ is an injection. In the category $\mathbf{Hopf}_{K,coc}$ the above factorization is obtained as in the category of vector spaces since
$HCoker(i)=\dfrac{A}{A\big(Hker(f)\big)^{+} A}$
(by \cite{Andru1}),  and $ker(f) = A\big(Hker(f)\big)^{+} A,$ (by \cite{shudo1988,Newman19751}).

Note that any epimorphism of cocommutative Hopf $K$-algebras is then a normal epimorphism, and the following classes of epimorphisms coincide in $\mathbf{Hopf}_{K,coc}$:

\hspace*{2cm} normal epis = regular epis = epis = surjective morphisms.

\subsubsection{Pullback stability of regular epimorphisms in the category $\mathbf{Hopf}_{K,coc}$}

To prove the pullback stability of regular epimorphisms in the category $\mathbf{Hopf}_{K,coc}$, the approach we follow is to apply the pullback stability of regular epimorphisms in the two full subcategories $\mathbf{GrpHopf}_{K}$ and $\mathbf{PrimHopf}_{K}$ of $\mathbf{Hopf}_{K,coc}$, which are both semi-abelian, and closed under pullbacks and quotients in  $\mathbf{Hopf}_{K,coc}$. From the regularity of these two categories and the decomposition Theorem \ref{MM} we shall deduce the regularity of $\mathbf{Hopf}_{K,coc}$.

\begin{lemma}\label{Subcategories}
The two full subcategories $\mathbf{GrpHopf}_{K}$ and $\mathbf{PrimHopf}_{K}$, of  $\mathbf{Hopf}_{K,coc}$, are semi-abelian categories. Both these categories are closed under quotients and pullbacks in $\mathbf{Hopf}_{K,coc}$.
\end{lemma}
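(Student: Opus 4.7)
My plan is to establish the two assertions (semi-abelianness and closure under quotients and pullbacks) separately, relying on the two isomorphisms of categories $K[-]\colon \mathbf{Grp}\to \mathbf{GrpHopf}_{K}$ and $U\colon \mathbf{LieAlg}_{K}\to \mathbf{PrimHopf}_{K}$ (the latter using that $K$ has characteristic zero) recalled in the preliminaries, together with the right-adjointness of $\mathcal{G}$ and $P$. For the first assertion, since both $\mathbf{Grp}$ and $\mathbf{LieAlg}_{K}$ are classical examples of semi-abelian categories, these isomorphisms immediately transport semi-abelianness to $\mathbf{GrpHopf}_{K}$ and $\mathbf{PrimHopf}_{K}$.

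For closure under quotients, I would take a regular epimorphism $f\colon H\twoheadrightarrow H'$ in $\mathbf{Hopf}_{K,coc}$ with $H\in\mathbf{GrpHopf}_{K}$ and observe that images of grouplike elements are grouplike. Indeed, if $\Delta(x)=x\otimes x$ and $\epsilon(x)=1$, then the compatibility of $f$ with $\Delta$ and $\epsilon$ yields $\Delta(f(x))=f(x)\otimes f(x)$ and $\epsilon(f(x))=1$, so $f(x)$ is a nonzero grouplike of $H'$. Since $H$ is generated as an algebra by its grouplikes and $f$ is surjective, $H'$ is generated as an algebra by grouplikes, hence belongs to $\mathbf{GrpHopf}_{K}$. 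The analogous argument, invoking the identity $\Delta(f(x))=f(x)\otimes 1+1\otimes f(x)$ for primitive $x$, handles $\mathbf{PrimHopf}_{K}$.

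For closure under pullbacks in $\mathbf{Hopf}_{K,coc}$, I would fix a cospan $A\to C\leftarrow B$ in $\mathbf{GrpHopf}_{K}$ and let $Z$ denote its pullback computed in $\mathbf{Hopf}_{K,coc}$. Since $P$ is right adjoint to $U$, it preserves pullbacks, so $P(Z)\cong P(A)\times_{P(C)}P(B)=0$, the last equality following from the elementary computation that $P(K[G])=0$ for every group $G$ (a grouplike can be written as a linear combination of distinct grouplikes in only one way). By Theorem \ref{MM}, $Z\cong U(P(Z))\rtimes K[\mathcal{G}(Z)]\cong K[\mathcal{G}(Z)]$, which lies in $\mathbf{GrpHopf}_{K}$. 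The symmetric argument, exploiting that $\mathcal{G}$ is right adjoint to $K[-]$ and that $\mathcal{G}(U(L))$ is trivial in characteristic zero, shows that $\mathbf{PrimHopf}_{K}$ is closed under pullbacks.

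The main obstacle lies in the pullback step: it rests on the Cartier--Gabriel--Kostant--Milnor--Moore structure theorem, which is what converts the vanishing of one of the two natural invariants $P(Z)$ or $\mathcal{G}(Z)$ into actual membership in the corresponding subcategory. Every other ingredient reduces to the preservation of limits by right adjoints and to the functorial behaviour of the grouplike and primitive constructions.
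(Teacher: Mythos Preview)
Your argument is correct. The treatment of semi-abelianness and of closure under quotients is essentially identical to the paper's. For closure under pullbacks, however, you take a genuinely different route: you exploit that $P$ and $\mathcal{G}$ are right adjoints, hence preserve the pullback $Z$, and then feed the resulting triviality of $P(Z)$ (resp.\ $\mathcal{G}(Z)$) into the Cartier--Gabriel--Kostant decomposition to conclude. The paper instead proves the stronger facts that $\mathbf{GrpHopf}_{K}$ and $\mathbf{PrimHopf}_{K}$ are closed under \emph{products} and under \emph{subobjects} in $\mathbf{Hopf}_{K,coc}$ (the latter by observing that a monomorphism into $U(L)$, being injective and preserving grouplikes, forces its domain to have trivial grouplikes, and then applying Theorem~\ref{MM}); closure under pullbacks then follows formally. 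Your approach is slightly more direct for the statement at hand, while the paper's detour yields the additional subobject-closure of $\mathbf{PrimHopf}_{K}$, which is precisely what is invoked later to conclude that the torsion theory of Theorem~\ref{hereditary torsion} is \emph{hereditary}. Both arguments ultimately hinge on the same structure theorem, and both implicitly rely on the reduction to an algebraically closed field (Remark following the lemma) in order to apply it.
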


\begin{proof}
As recalled in Preliminaries 2.2, the two full subcategories $\mathbf{GrpHopf}_{K}$ and $\mathbf{PrimHopf}_{K}$ are isomorphic to the category $\mathbf{Grp}$ of groups and to the category $\mathbf{LieAlg}_{K}$ of Lie $K$-algebras, respectively. The fact that $\mathbf{Grp}$ and $\mathbf{LieAlg}_{K}$ are semi-abelian is well-known (see \cite{BorceuxBourn}, for instance).

The categories $\mathbf{GrpHopf}_{K}$ and $\mathbf{PrimHopf}_{K}$ are closed under quotients since morphisms of Hopf $K$-algebras preserve both group-like and primitive elements. These categories are closed under products in the category $\mathbf{Hopf}_{K,coc}$, as explained in \cite{Milnor}. To see that $\mathbf{PrimHopf}_{K}$ is closed under subobjects in $\mathbf{Hopf}_{K,coc}$, let us consider a monomorphism $m: A \rightarrowtail U(L)$ in $\mathbf{Hopf}_{K,coc}$ with codomain a primitive Hopf algebra. The morphism $m$, being a Hopf algebra morphism, preserves group-like elements and the group of group-like elements is trivial in the primitive Hopf algebra $U(L)$. Since $m$ is injective, we see that the group of group-like elements of $A$ is trivial as well. By applying Theorem \ref{MM}, we conclude that $A$ has to be a primitive Hopf algebra as well. Similar arguments show that $\mathbf{GrpHopf}_{K}$ is closed under subobjects in $\mathbf{Hopf}_{K,coc}$. It follows that the subcategories $\mathbf{GrpHopf}_{K}$ and $\mathbf{PrimHopf}_{K}$ are closed under pullbacks in $\mathbf{Hopf}_{K,coc}$.
\end{proof}

\begin{remark}

 In the following we shall assume that $K$ is an algebraically closed field. It can be checked that this is not  a restriction: indeed, given a field $K$ and $\phi:K\to\overline K$ an embedding of $K$ in an algebraic closure $\overline{K}$, one has the adjunction 
\[\xymatrixcolsep{6pc}
\xymatrix{
\mathbf{Hopf}_{\overline{K},coc}  \ar@/_/[r]_-{R_\phi}^-{\perp} &\mathbf{Hopf}_{K,coc}  \ar@/_/[l]_-{L_\phi}
}\]
where $R_\phi$ is the ``restriction of scalars functor'' and $L_\phi=- \otimes_K \overline K$ its left adjoint, the ``extension of scalars'' functor. Being a left adjoint, $L_\phi$ preserves regular epimorphisms and moreover $L_\phi$ reflects regular epimorphisms and preserves finite limits. 
Accordingly, knowing that $\mathbf{Hopf}_{\overline{K},coc}$ is regular (respectively, exact), one can deduce from this that $\mathbf{Hopf}_{{K},coc}$ is regular (resp. exact) as well.

\end{remark}

The following result concerning split short exact sequences in $\mathbf{Hopf}_{K,coc}$ will be useful in the proof of the regularity of this category:

\begin{lemma}\label{surjective}

Given the following commutative diagram of split short exact sequences in $\mathbf{Hopf}_{K,coc}$:
\[\xymatrixcolsep{5pc}\xymatrix{
0 \ar[r]^-{} &  A_1 \ar[r]_-{i_{H_1}} \ar[d]_-{{h}_{A}} & H_1 \ar[r]_-{p_{H_1}} \ar[d]_-{h}  & B_1 \ar[d]^-{{h}_B} \ar[r]^-{} \ar@/_/[l]_-{s_{H_1}}& 0 \\
0 \ar[r]^-{} &  A_2 \ar[r]_-{i_{H_2}} & H_2 \ar[r]_-{p_{H_2}}   & B_2 \ar[r]^-{} \ar@/_/[l]_-{s_{H_2}} & 0 }\]

We have that $h$ is surjective if and only if both $h_A$ and $h_B$ are surjective.
\end{lemma}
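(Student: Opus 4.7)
The strategy is to exploit Lemma \ref{Molnar}, which identifies each of $H_1$ and $H_2$ (as coalgebras, hence as vector spaces) with the corresponding tensor product. Under this identification, the map $h$ becomes the tensor product $h_A\otimes h_B$ of linear maps, and the two side morphisms $i_{H}, s_{H}, p_{H}$ become the canonical injections/projections of a smash product. Once this is set up, both implications are reduced to elementary linear algebra in a tensor product.

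For the forward direction, assume $h_A$ and $h_B$ are surjective. Using Lemma \ref{Molnar}, I would replace the given diagram with the isomorphic one in which $H_i = A_i\rtimes B_i$ and would check directly that the middle map becomes, as a linear map on the underlying vector space, the tensor product $h_A\otimes h_B : A_1\otimes B_1 \to A_2\otimes B_2$. The tensor product of two surjective linear maps is surjective, so $h$ is surjective.

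For the reverse direction, suppose $h$ is surjective. The surjectivity of $h_B$ is immediate from the commutativity of the right-hand square: the composite $h_B\circ p_{H_1} = p_{H_2}\circ h$ is a composite of surjections, hence surjective, which forces $h_B$ to be surjective. For $h_A$, I would again pass to the smash product model. Given $a_2\in A_2$, the element $i_{H_2}(a_2)$ corresponds to $a_2\otimes 1_{B_2}\in A_2\otimes B_2$; by surjectivity of $h$, choose $x\in H_1$ with $h(x)=i_{H_2}(a_2)$, and write $x=\sum_i a^{(i)}\otimes b^{(i)}\in A_1\otimes B_1$. Applying the linear map $\mathrm{id}_{A_2}\otimes \epsilon_{B_2}$ to the equality $\sum_i h_A(a^{(i)})\otimes h_B(b^{(i)}) = a_2\otimes 1_{B_2}$, and using that $h_B$ is a bialgebra map and thus preserves the counit, yields
\[
a_2 \;=\; \sum_i \epsilon_{B_1}(b^{(i)})\,h_A(a^{(i)}) \;=\; h_A\Bigl(\sum_i \epsilon_{B_1}(b^{(i)})\,a^{(i)}\Bigr),
\]
so $a_2$ lies in the image of $h_A$.

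The main thing to verify carefully is that the middle map $h$ really does correspond to $h_A\otimes h_B$ under the isomorphisms of Lemma \ref{Molnar}; this is a direct computation using the formula $h_i(a\otimes b) = i_{H_i}(a)\,s_{H_i}(b)$ together with the commutativity of the two outer squares in the given diagram. Once that is in hand, the rest is a routine application of the linear retraction $\mathrm{id}_A\otimes \epsilon_B$ of the first inclusion $i_1 = \mathrm{id}_A\otimes u_B$ in the smash product.
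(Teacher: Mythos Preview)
Your proposal is correct and follows essentially the same approach as the paper: both reduce via Lemma \ref{Molnar} to the tensor product $h_A\otimes h_B$, handle the forward direction by linear algebra, and for the reverse direction use the linear retraction $\mathrm{id}_A\otimes\epsilon_B$ of the inclusion $i_1$. The paper phrases that last step more abstractly (writing $\xi_i=\mathrm{id}_{A_i}\otimes\epsilon_{B_i}$ and using $h_A\circ\xi_1=\xi_2\circ(h_A\otimes h_B)$), but your element-wise computation is the same argument.
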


\begin{proof}
We apply Lemma \ref{Molnar} to the exact sequences in the statement of the Lemma, we obtain the following commutative diagram which is canonically isomorphic to the previous one:
 \[\xymatrixcolsep{5pc}\xymatrix{
0 \ar[r]^-{} &  A_1 \ar[r]_-{i_{1}} \ar[d]_-{{h}_{A}} & A_1 \rtimes B_1  \ar[r]_-{p_{1}} \ar[d]_-{h_A \otimes h_B} \ar@/_/@{.>}[l]_-{\xi_{1}} & B_1 \ar[d]^-{{h}_B} \ar[r]^-{} \ar@/_/[l]_-{s_{1}}& 0 \\
0 \ar[r]^-{} &  A_2 \ar[r]_-{i_{2}} & A_2 \rtimes B_2  \ar[r]_-{p_{2}}  \ar@/_/@{.>}[l]_-{\xi_{2}} & B_2 \ar[r]^-{} \ar@/_/[l]_-{s_{2}} & 0 }\]
Hence, the morphism $h$ is surjective if and only if $h_A \otimes h_B \colon A_1 \rtimes B_1 \to A_2 \rtimes B_2$ is surjective.

If $h_A$ and $h_B$ are surjective, then $h_A \otimes h_B$ is surjective by considering this morphism on its underlying vector space. For the converse implication, if $ h_A \otimes h_B $ is surjective, let us note that 
 for each semi-direct product $A_i\rtimes B_i$, the underlying coalgebra is exactly the categorical product of the coalgebras $A_i$ and $B_i$; we denote $\xi_i=id_{A_i}\ot \epsilon_{B_i}$ for the coalgebra-projection of $A_i\rtimes B_i$ onto $A_i$ (which is not a Hopf algebra morphism).
 
It is clear that $h_A \circ \xi_1 = \xi_2 \circ  (h_A \otimes h_B)$, as coalgebra morphisms. Since $\xi_{2}$ is a split epimorphism and $h_A \otimes h_B$ is surjective, we conclude that $h_A$ is surjective. It is clear that $h_{B}$ is surjective whenever $h$ is.

\end{proof}

\begin{theorem}
Consider the following pullback $(P,\pi_{A},\pi_{B})$ in the category $\mathbf{Hopf}_{K,coc}$:
\begin{equation}\label{pullback}
\xymatrix{
P \ar[r]^-{\pi_{B}} \ar[d]_-{\pi_{A}}   \ar@{}[rd]|<<<{\pullback}  & B \ar[d]^-{g}\\
A \ar[r]_-{f} & C
}
\end{equation}

\vspace*{0.2cm}
if $f$ is a regular  epimorphism then $\pi_{B}$ is a regular epimorphism.
%\end{flushleft}
\end{theorem}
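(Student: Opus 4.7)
The plan is to reduce the pullback-stability question in $\mathbf{Hopf}_{K,coc}$ to the known pullback-stability of regular epimorphisms in $\mathbf{Grp}$ and $\mathbf{LieAlg}_{K}$, using the Cartier--Gabriel--Kostant--Milnor--Moore decomposition. First, by the remark just preceding the theorem, I may assume that $K$ is algebraically closed, so that Theorem \ref{MM} is at my disposal for every object of $\mathbf{Hopf}_{K,coc}$; and since regular epimorphisms in $\mathbf{Hopf}_{K,coc}$ coincide with surjections (as established in Section 3.3.1), it suffices to show that $\pi_{B}$ is surjective.

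Next, I would observe that the functors $\mathcal{G}\colon \mathbf{Hopf}_{K,coc}\to\mathbf{Grp}$ and $P\colon \mathbf{Hopf}_{K,coc}\to\mathbf{LieAlg}_{K}$ are right adjoints (to $K[-]$ and $U$, respectively), so both preserve all limits, and in particular the pullback square (\ref{pullback}). Hence $\mathcal{G}(\pi_{B})$ is the pullback of $\mathcal{G}(f)$ along $\mathcal{G}(g)$ in $\mathbf{Grp}$, and $P(\pi_{B})$ is the pullback of $P(f)$ along $P(g)$ in $\mathbf{LieAlg}_{K}$.

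Now since $f$ is surjective, Lemma \ref{surjective} applied to its canonical decomposition from Remark \ref{morphism} tells me that $U(P(f))$ and $K[\mathcal{G}(f)]$ are surjective; via the isomorphisms $\mathbf{PrimHopf}_{K}\cong\mathbf{LieAlg}_{K}$ and $\mathbf{GrpHopf}_{K}\cong\mathbf{Grp}$, this is equivalent to surjectivity of $P(f)$ and $\mathcal{G}(f)$ in their respective categories. Since $\mathbf{Grp}$ and $\mathbf{LieAlg}_{K}$ are semi-abelian, hence regular, their surjections are stable under pullback, so $\mathcal{G}(\pi_{B})$ and $P(\pi_{B})$ are surjective. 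Applying Lemma \ref{surjective} in the reverse direction to the canonical decomposition of $\pi_{B}$, whose two component maps $U(P(\pi_{B}))$ and $K[\mathcal{G}(\pi_{B})]$ are now both surjective, yields that $\pi_{B}$ itself is surjective, i.e.\ a regular epimorphism.

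The only delicate point is the claim that $P$ and $\mathcal{G}$ preserve the pullback (\ref{pullback}), which is where the adjunction diagram displayed after the Remark of Section 2.2 is crucial; once this preservation is granted, the proof is simply a sandwich between two applications of Lemma \ref{surjective}, bridged by regularity of $\mathbf{Grp}$ and $\mathbf{LieAlg}_{K}$.
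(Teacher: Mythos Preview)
Your proposal is correct and follows essentially the same architecture as the paper's proof: reduce to the algebraically closed case, use Lemma~\ref{surjective} forward to deduce surjectivity of the primitive and group-like components of $f$, invoke regularity of $\mathbf{Grp}$ and $\mathbf{LieAlg}_K$ on the component pullbacks, and then apply Lemma~\ref{surjective} backward to conclude. The only difference is how you identify the components of the pullback $P$: the paper builds $P_1$ and $P_2$ as pullbacks in $\mathbf{PrimHopf}_K$ and $\mathbf{GrpHopf}_K$ and relies on Lemma~\ref{Subcategories} (closure under pullbacks) to match them with the canonical decomposition of $P$, whereas you obtain this identification directly from the fact that $\mathcal{G}$ and $P$ are right adjoints and hence preserve the pullback square---a slightly cleaner justification of the same step.
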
 
\begin{proof}
Thanks to Lemma \ref{Subcategories}, regular epimorphisms are pullback stable whenever the Hopf algebras $A$, $B$ and $C$ in diagram \eqref{pullback} belong to $\mathbf{GrpHopf}_{K}$, or to $\mathbf{PrimHopf}_{K}$, respectively.

Let us now consider $A$, $B$ and $C$ cocommutative Hopf $K$-algebras over a field $K$ of characteristic zero. By Theorem \ref{MM}, we have: $A \cong U(L_{A}) \rtimes K[G_{A}]$, $B \cong U(L_{B}) \rtimes K[G_{B}]$ and $C \cong U(L_{C}) \rtimes K[G_{C}]$.

Thanks to Remark \ref{morphism}, we can build the following commutative diagram.

\[\xymatrixcolsep{5pc}\xymatrixrowsep{0.7pc}
\hspace{-1cm}  \xymatrix{
    P_{1} \ar[rrr]^-{{\pi}_{B_1}} \ar[ddd]_-{\pi_{A_1}} \ar[rd]_-{i_{P}} \ar@{}[rd]|<<<<<{\pullback} &&& U(L_B) \ar[rd]^-{i_{B}} \ar[ddd]^-{g_{1}}  \\
    & P \ar[rrr]^-{\pi_{B}}   \ar[ddd]_-{\pi_{A}} \ar[rd]_-{p_{P}} \ar@{}[rd]|<<<<<{\pullback} &&& B \ar[rd]_-{p_{B}} \ar[ddd]^-{g}  \\
    && P_{2} \ar[rrr]^-{{\pi}_{B_2}} \ar@/_/[lu]_(.3){s_{P}} \ar[ddd]_-{\pi_{A_2}} \ar@{}[rd]|<<<<<{\pullback} &&& K[G_B] \ar@/_/[lu]_{s_{B}} \ar[ddd]^-{g_{2}} \\
    U(L_A) \ar[rrr]_-{f_{1}}  \ar[rd]_-{i_{A}} &&& U(L_C) \ar[rd]^-{i_{C}} \\
    & A \ar[rrr]_-{f}  \ar[rd]_-{p_{A}} &&& C \ar[rd]_-{p_{C}}  \\
    && K[G_A] \ar@/_/[lu]_(.3){s_{A}} \ar[rrr]_-{f_{2}} &&& K[G_C] \ar@/_/[lu]_{s_{C}} \\
  }\] 
where $(P_{1},\pi_{A_1},\pi_{B_1})$ is the pullback of $f_{1}$ and $g_{1}$, $(P_{2},\pi_{A_2},\pi_{B_2})$ is the pullback of $f_{2}$ and $g_{2}$.

When $f$ is surjective, the surjectivity of $f_{1}$ and $f_{2}$ follows both from Lemma \ref{surjective} applied to the lower part of the diagram. The front and back squares of the diagram are in $\mathbf{GrpHopf}_{K}$ and in $\mathbf{PrimHopf}_{K}$, respectively, thus both ${\pi}_{B_1}$ and ${\pi}_{B_2}$ are surjective (by Lemma \ref{Subcategories}). Applying Lemma \ref{surjective} again (in converse direction), we obtain that $\pi_B$ is also surjective.

\end{proof}

\section{A torsion theory in the category $\mathbf{Hopf}_{K,coc}$}\label{torsion}
In the non-abelian context of homological categories it is natural to define and study a general notion of torsion theory, that extends the one introduced by S.E. Dickson in the frame of abelian categories \cite{Dickson}. This study was first initiated in \cite{Bourn200618}, and further developed in \cite{Duckerts20121837} \cite{EveraertGran}, also in relationship with semi-abelian homology theory.

Let us recall the definition of a torsion theory in the homological context:

\begin{definition}
\textsf{
In a homological category $\mathbf{C}$, a \textit{torsion theory} is given by a pair ($\mathbf{T}$,$\mathbf{F}$) of full and replete (i.e. isomorphism closed) subcategories of $\mathbf{C}$ such that:
\begin{enumerate}[i.]
\item For any object $X$ in $\mathbf{C}$, there exists a short exact sequence:
 \[\xymatrix{
0 \ar[r]^-{} &  T \ar[r]^-{t_{X}} & X \ar[r]^-{\eta_{X}} & F \ar[r]^-{} & 0 }\]
where $0$ is the zero object in $\mathbf{C}$, $T \in \mathbf{T}$ and $F \in \mathbf{F}$.
\item The only morphism $f: T\longrightarrow F$ from $T \in \mathbf{T}$ to $F \in \mathbf{F}$ is the zero morphism.
\end{enumerate}
}
\end{definition}
When ($\mathbf{T}$,$\mathbf{F}$) is a torsion theory, $\mathbf{T}$ is called the \textit{torsion} subcategory of $\mathbf{C}$, and $\mathbf{F}$ its \textit{torsion-free} subcategory.
Among the many examples in the homological context, let us just mention the following ones:

\begin{example}
\textsf{
\begin{enumerate}
\item
 Every torsion theory in an abelian category $\mathbf{C}$. For instance: the pair ($\mathbf{Ab}_{t}$, $\mathbf{Ab}_{tf}$) in the category $\mathbf{Ab}$ of abelian groups, where $\mathbf{Ab}_{t}$ and $\mathbf{Ab}_{tf}$ denote the full and replete subcategories of the category of abelian groups whose objects are torsion and torsion-free abelian groups, respectively.
\item
 The pair ($\mathbf{NilCRng}$, $\mathbf{RedCRng}$) in the category $\mathbf{CRng}$ of commutative rings, where $\mathbf{NilCRng}$ and $\mathbf{RedCRng}$ denote the full subcategories of nilpotent commutative rings, and of reduced commutative rings (i.e. commutative rings with no nontrivial nilpotent elements), respectively.
\item
 The pair ($\mathbf{Grp(Ind)}$, $\mathbf{Grp(Haus)}$) in the category $\mathbf{Grp(Top)}$ of topological groups, where $\mathbf{Grp(Ind)}$ and $\mathbf{Grp(Haus)}$ denote the full subcategories of indiscrete groups and of Hausdorff groups, respectively.
\end{enumerate}
}
\end{example}

From now on, in the homological category of cocommutative Hopf $K$-algebras, $\mathbf{T}$ will always denote the (full) subcategory $\mathbf{PrimHopf}_{K}$ of primitive Hopf algebras, and $\mathbf{F}$ the (full) subcategory $\mathbf{GrpHopf}_{K}$ of group Hopf algebras.

\begin{theorem}\label{hereditary torsion}
The pair ($\mathbf{PrimHopf}_{K}$,$\mathbf{GrpHopf}_{K}$) is a hereditary torsion theory in $\mathbf{Hopf}_{K,coc}$.
\end{theorem}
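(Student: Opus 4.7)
The plan is to verify the three conditions in order: (i) the existence of a torsion short exact sequence for every object of $\mathbf{Hopf}_{K,coc}$, (ii) triviality of $\mathrm{Hom}(T,F)$ for $T\in\mathbf{T}$ and $F\in\mathbf{F}$, and (iii) heredity, i.e.\ closure of $\mathbf{T}=\mathbf{PrimHopf}_K$ under subobjects.

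For (i), the entire work has already been done: given any cocommutative Hopf $K$-algebra $H$, Theorem \ref{MM} supplies a canonical split short exact sequence
\[
0 \longrightarrow U(L_H) \xrightarrow{\; i_H\;} H \xrightarrow{\; p_H\;} K[G_H] \longrightarrow 0,
\]
in which $U(L_H)\in\mathbf{PrimHopf}_K=\mathbf{T}$ and $K[G_H]\in\mathbf{GrpHopf}_K=\mathbf{F}$. The (non-algebraically closed) general case is reduced to the algebraically closed one by the remark preceding the theorem, since the extension-of-scalars functor $L_\phi$ reflects zero morphisms and exact sequences.

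For (ii), the key point is that the only primitive element of a group Hopf algebra $K[G]$ is $0$. This is a direct Sweedler-basis computation: if $x=\sum_{g\in G}\alpha_g g$ satisfies $\Delta(x)=x\ot 1+1\ot x$, then comparing coefficients in the basis $\{g\ot h\}$ forces $\alpha_g=0$ for every $g\in G$. Consequently, given any morphism $f\colon U(L)\to K[G]$ in $\mathbf{Hopf}_{K,coc}$ with $U(L)\in\mathbf{T}$ and $K[G]\in\mathbf{F}$, the restriction of $f$ to the primitive elements $L\subseteq U(L)$ lands in $P(K[G])=0$. Since $L$ generates $U(L)$ as an algebra and $f$ is multiplicative and unital, $f$ factors through the counit, i.e.\ $f=u_{K[G]}\circ\epsilon_{U(L)}$, which is exactly the zero morphism in the pointed category $\mathbf{Hopf}_{K,coc}$.

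For (iii), heredity requires that a subobject of an object of $\mathbf{T}$ lies again in $\mathbf{T}$; this is precisely the content of the argument given in Lemma \ref{Subcategories}, where it is shown that $\mathbf{PrimHopf}_K$ is closed under subobjects in $\mathbf{Hopf}_{K,coc}$ (if $A\rightarrowtail U(L)$ is a monomorphism, then $G_A\hookrightarrow G_{U(L)}=\{1\}$ is trivial, so by Theorem \ref{MM} one has $A\cong U(L_A)$). Nothing more needs to be done for heredity, so the only genuinely new work is part (ii). No step looks technically hard: the potential obstacle is (ii), but it reduces to the elementary observation that grouplike and primitive bases are disjoint, combined with the fact that $U(L)$ is generated as an algebra by $L$.
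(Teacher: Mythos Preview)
Your proof is correct and follows essentially the same route as the paper: condition (i) is obtained from Theorem~\ref{MM}, condition (ii) from the observation that $P(K[G])=0$ together with the fact that $L$ generates $U(L)$ as an algebra, and heredity from Lemma~\ref{Subcategories}. You supply slightly more detail than the paper (the explicit basis computation showing $P(K[G])=0$, and the reduction to the algebraically closed case via $L_\phi$), but the structure of the argument is identical.
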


\begin{proof}
Thanks to Theorem \ref{MM}, we know that we can associate the following short exact sequence with any cocommutative Hopf $K$-algebra $H$:
 \[\xymatrix{
0 \ar[r]^-{} &  U(L_{H}) \ar[r]^-{i_{H}} & H \ar[r]^-{p_{H}} & K[G_{H}] \ar[r]^-{} & 0 }\]

Any morphism $f \colon U(L) \rightarrow K[G]$ from a primitive Hopf algebra $U(L) \in \mathbf{PrimHopf}_{K}$ to a group Hopf algebra $K[G] \in \mathbf{GrpHopf}_{K}$ is the zero morphism in $\mathbf{Hopf}_{K,coc}$. Indeed, any primitive Hopf algebra $U(L)$ is generated by its primitive elements, which are preserved by any morphism of Hopf algebras. Since a group Hopf algebra $K[G]$ does not contain any non-zero primitive element, it follows that $f$ is the zero morphism. It follows that ($\mathbf{PrimHopf}_{K}$,$\mathbf{GrpHopf}_{K}$) is a torsion theory, which is actually hereditary thanks to Lemma \ref{Subcategories}.

\end{proof}

As it follows from the results in \cite{Bourn200618} the reflector $I$ in the adjunction
\[\xymatrixcolsep{6pc}
\xymatrix{
\mathbf{F} \ar@/_/[r]_-{H}^-{\perp} &\mathbf{Hopf}_{K,coc}  \ar@/_/[l]_-{I}
}\]
is semi-left-exact in the sense of Cassidy-Hebert-Kelly \cite{JAZ:4910656}, i.e. it preserves all pullbacks of the form

\begin{equation*}
\xymatrix{
P \ar[r]^-{p_{2}} \ar[d]_-{p_{1}}   \ar@{}[rd]|<<<{\pullback}  & Y \ar[d]^-{\eta_{Y}}\\
H(X) \ar[r]_-{H(f)} & HI(Y)
}
\end{equation*}
where $\eta_{Y}$ is the $Y$-component of the unit of the adjunction and $f$ lies in the subcategory $\mathbf{F}$.
The adjunction is then \emph{admissible} in the sense of categorical Galois theory \cite{Janelidze}: this opens the way to further investigations in the direction of semi-abelian homology \cite{Duckerts20121837}. The fact that the torsion theory is hereditary and $\mathbf{Hopf}_{K,coc} $ a homological category implies that  the corresponding \emph{Galois coverings} are precisely those regular epimorphisms $f \colon A \rightarrow B$ in $\mathbf{Hopf}_{K,coc} $ with the property that the kernel $Hker(f)$ is in $\mathbf{F}$ (by applying Theorem $4.5$ in \cite{GranRossi}). This fact is crucial to describe generalized Hopf formulae for homology, as explained in \cite{EveraertGran}.

\section{The category of cocommutative Hopf algebras over a field of characteristic zero is semi-abelian}
\label{exact}

In order to prove that $\mathbf{Hopf}_{K,coc}$ is semi-abelian, it remains to show that equivalence relations are effective.
For this, we shall show that any equivalence relation $R$  as in diagram \eqref{reflexive}  in $\mathbf{Hopf}_{K,coc}$ is the kernel pair of its coequalizer $q \colon X \rightarrow \overline{X}$. We first apply Theorem \ref{MM} to the equivalence relation $R$, obtaining the following commutative diagram, where the morphisms $q_{1}$, $q_{2}$ and $q$ are the coequalizers of $p_{11}$ and $p_{21}$, $p_{12}$ and $p_{22}$, $p_{1}$ and $p_{2}$, respectively, and ($Eq(q)$,$\pi_{1}$,$\pi_{2}$) is the kernel pair of $q$.

\[\xymatrixcolsep{4.5pc} \xymatrix{ 
  0 \ar[r]^-{} & U(L_{R}) \ar@/_/[rrd]_<<<{i'_{R}} \ar[r]^-{i_{R}} \ar@<2ex>[dd]^-{p_{21}} \ar@<-2ex>[dd]_-{p_{11}} & R  \ar@<2ex>[dd]^-{p_{2}} \ar@<-2ex>[dd]_-{p_{1}} \ar[rr]_-{p_{R}} \ar[rd]^-{\theta}& & K[G_{R}] \ar@<2ex>[dd]^-{p_{22}} \ar@<-2ex>[dd]_-{p_{12}}  \ar[r]^-{} & 0 \\
 & & & Eq(q) \ar@<0.3ex>[ld]^-{\pi_{2}} \ar@<-0.3ex>[ld]_-{\pi_{1}} \ar@/_/[ru]^-{p'_{R}} & & \\
 0 \ar[r]^-{} & U(L_{X}) \ar[r]^-{i_{X}} \ar[d]_-{q_{1}} \ar[uu]|-{\Delta_{X_{1}}} & X  \ar[rr]_-{p_{X}} \ar[uu]|-{\Delta_{X}} \ar[d]_-{q}& & K[G_{X}] \ar[d]^-{q_{2}} \ar[uu]|-{\Delta_{X_{2}}}  \ar[r]^-{} & 0  \\
 &  \overline{U(L_{X})} \ar[r]_-{\overline{i_{X}}} & \overline{X} \ar[rr]_-{\overline{p_{X}}}& &\overline{K[G_{X}]}   & 
 }\]

 Thanks to Lemma \ref{Subcategories}, the left column is exact, i.e. $U(L_{R}) = {Eq}(q_1)$. On the other hand, let us explain why the right column is also exact. First observe that $(K[G_{R}],p_{12}, p_{22})$ is a reflexive relation on $K[G_{X}]$, while the category $\mathbf{GrpHopf}_{K}$ of group Hopf algebras is exact Mal'tsev and closed under pullbacks and quotients in $\mathbf{Hopf}_{K,coc}$. It follows that $K[G_{R}]$ is the kernel pair of $q_2$ in $\mathbf{Hopf}_{K,coc}$.

The universal property of ($Eq(q)$,$\pi_{1}$,$\pi_{2}$) gives the unique arrow $\theta \colon R \rightarrow Eq(q)$ with $\pi_1 \circ \theta = p_1$ and $\pi_2 \circ \theta = p_2$. One can check that the arrow $\overline{i_{X}}$ is a monomorphism, by using the fact that the coequalizers of equivalence relations in $\mathbf{Hopf}_{K,coc}$ are computed as in $\mathbf{Coalg}_{K,coc}$. Consequently, the lower row in the diagram above is an exact sequence as is then the lower row in the following diagram.

\[\xymatrixcolsep{5pc}\xymatrix{
0 \ar[r]^-{} & U(L_{R})  \ar[r]^-{i_{R}} \ar[d]_-{id_{U(L_{R})}} & R \ar[r]^-{p_{R}} \ar[d]^-{\theta} & K[G_{R}] \ar[d]^-{id_{K[G_{R}]}} \ar[r]^-{} & 0 \\
0 \ar[r]^-{} & U(L_{R}) \ar[r]_-{i'_{R}} & Eq(q) \ar[r]_-{p'_{R}} & K[G_{R}] \ar[r]^-{} & 0 }\]
By applying the Split Short Five Lemma to the above commutative diagram, it follows that the morphism $\theta$ is an isomorphism, and the equivalence relation $R$ is effective.
One accordingly has the following:
\begin{theorem}\label{semi-abelian}
For any field $K$ of characteristic $0$, the category $\mathbf{Hopf}_{K,coc}$ of cocommutative Hopf $K$-algebras is semi-abelian. 
\end{theorem}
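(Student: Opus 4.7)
The plan is to deduce the Theorem by complementing the results already established: $\mathbf{Hopf}_{K,coc}$ is pointed (with zero object $K$), has finite limits and coproducts (being locally presentable), is protomodular (Section \ref{protomodular}) and regular (Section \ref{homological}). Therefore the only missing semi-abelian ingredient is Barr-exactness, that is, the effectivity of every internal equivalence relation. Given an equivalence relation $(R,p_{1},p_{2})$ on $X$, I would form its coequalizer $q\colon X\to \overline{X}$, take its kernel pair $(Eq(q),\pi_{1},\pi_{2})$, and consider the canonical comparison $\theta\colon R\to Eq(q)$ with $\pi_{i}\circ\theta=p_{i}$; the job is then to prove $\theta$ is an isomorphism.

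The strategy is to reduce the question to the semi-abelian subcategories $\mathbf{PrimHopf}_{K}$ and $\mathbf{GrpHopf}_{K}$ via the Cartier--Gabriel--Kostant--Milnor--Moore decomposition of Theorem \ref{MM}. Applying it functorially, using Remark \ref{morphism}, produces split short exact sequences for $R$ and $X$, and restricts $(p_{1},p_{2})$ to a reflexive relation $(U(L_{R}),p_{11},p_{21})$ on $U(L_{X})$ in $\mathbf{PrimHopf}_{K}$ and a reflexive relation $(K[G_{R}],p_{12},p_{22})$ on $K[G_{X}]$ in $\mathbf{GrpHopf}_{K}$; both restrictions are again equivalence relations by functoriality of $P$ and $\mathcal{G}$. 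By Lemma \ref{Subcategories} the two subcategories are semi-abelian and closed under quotients and pullbacks in $\mathbf{Hopf}_{K,coc}$, so these restricted equivalence relations are effective, with kernel pairs that coincide whether one computes them inside the subcategories or inside the ambient category.

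The main obstacle I expect is to control the coequalizer $q$ itself: namely, to produce a split short exact sequence $0\to \overline{U(L_{X})}\to \overline{X}\to \overline{K[G_{X}]}\to 0$ compatible with those for $R$ and $X$, whose extremes are precisely the coequalizers $q_{1},q_{2}$ of the primitive and group-like parts. The delicate point is the injectivity of the induced map $\overline{i_{X}}\colon\overline{U(L_{X})}\to\overline{X}$, for which I would exploit the fact that coequalizers of equivalence relations in $\mathbf{Hopf}_{K,coc}$ are computed as in $\mathbf{Coalg}_{K,coc}$, together with the closure of $\mathbf{PrimHopf}_{K}$ under subobjects in $\mathbf{Hopf}_{K,coc}$ (Lemma \ref{Subcategories}). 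Once this is secured, the kernel pair $Eq(q)$ itself fits into a split short exact sequence with extremes $U(L_{R})=Eq(q_{1})$ and $K[G_{R}]=Eq(q_{2})$, so the commutative diagram whose rows are the two split exact sequences for $R$ and $Eq(q)$ and whose vertical maps are $\mathrm{id}_{U(L_{R})}$, $\theta$, $\mathrm{id}_{K[G_{R}]}$ has both rows split exact. The Split Short Five Lemma from Section \ref{protomodular} then immediately forces $\theta$ to be an isomorphism, so $R$ is the kernel pair of $q$, every equivalence relation is effective, and $\mathbf{Hopf}_{K,coc}$ is semi-abelian.
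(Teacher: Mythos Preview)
Your proposal is correct and follows essentially the same route as the paper: decompose $R$, $X$, and $\overline{X}$ via Theorem~\ref{MM}, use Lemma~\ref{Subcategories} to identify the extremes of the split exact sequence for $Eq(q)$ with $U(L_{R})$ and $K[G_{R}]$, verify that $\overline{i_{X}}$ is a monomorphism via the computation of coequalizers of equivalence relations in $\mathbf{Coalg}_{K,coc}$, and conclude by the Split Short Five Lemma. The only cosmetic difference is that the paper deduces the restricted relations are equivalence relations from the Mal'tsev property of the subcategories rather than from functoriality of $P$ and $\mathcal{G}$, but both justifications are valid.
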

This result has the following interesting   
\begin{corollary}\label{representable object actions}
For any field $K$ of characteristic $0$, the category $\mathbf{Hopf}_{K,coc}$ is action representable \\ (i.e. it has representable object actions in the sense of \cite{BJK}).
\end{corollary}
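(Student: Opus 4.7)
The plan is to combine the semi-abelianness of $\mathbf{Hopf}_{K,coc}$ established in Theorem \ref{semi-abelian} with the observation, already recorded in Section \ref{protomodular}, that $\mathbf{Hopf}_{K,coc}$ can be viewed as the category of internal groups in the category $\mathbf{Coalg}_{K,coc}$ of cocommutative $K$-coalgebras. The strategy is to build, for every cocommutative Hopf $K$-algebra $A$, a representing object (an \emph{actor} $[A]$) for the split extension functor $\mathrm{SplExt}(-,A)\colon \mathbf{Hopf}_{K,coc}^{op}\to \mathbf{Set}$, which is precisely what is needed for action representability in the sense of \cite{BJK}.

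The first step is to verify that $\mathbf{Coalg}_{K,coc}$ is cartesian closed: the categorical product of two cocommutative coalgebras is their tensor product over $K$, and internal hom-objects are supplied by applying the cofree cocommutative coalgebra functor to the $K$-vector space of linear maps. This puts us in the setting of the general principle, going back to \cite{BJK}, that a category of internal groups in a cartesian closed base admits an actor at each object, computed as the internal automorphism object of the underlying object of the base.

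Concretely, for a cocommutative Hopf algebra $A$ I would take $[A]$ to be the internal automorphism object of $A$ formed inside $\mathbf{Coalg}_{K,coc}$; one then checks that it inherits a canonical structure of cocommutative Hopf algebra, and the key step is to exhibit a natural bijection between Hopf algebra morphisms $B\to[A]$ and $B$-module Hopf algebra structures on $A$. By Lemma \ref{Molnar} these module structures classify, up to canonical isomorphism, the split extensions of $B$ by $A$ in $\mathbf{Hopf}_{K,coc}$, so $[A]$ represents $\mathrm{SplExt}(-,A)$, and action representability in the sense of \cite{BJK} follows from the semi-abelian framework of Theorem \ref{semi-abelian}.

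The main obstacle I anticipate lies in the careful verification of the cartesian closed structure on $\mathbf{Coalg}_{K,coc}$ and in the precise identification of the internal automorphism object as the actor: one has to see that, for an internal group $A$ in a cartesian closed base, internal Hopf algebra actions on $A$ are indeed classified by morphisms into the internal automorphism object. Once these formal inputs have been assembled, the corollary reduces to a direct application of the representability principle of \cite{BJK}.
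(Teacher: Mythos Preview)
Your proposal is correct and follows essentially the same approach as the paper: combine Theorem~\ref{semi-abelian} with the identification of $\mathbf{Hopf}_{K,coc}$ as internal groups in the cartesian closed category $\mathbf{Coalg}_{K,coc}$, and then invoke the representability principle of \cite{BJK}. The paper's proof is more compressed---it simply cites Theorem~4.4 of \cite{BJK} (which packages the construction of the actor as the internal automorphism object and the bijection you describe) together with \cite{Grunenfelder} for the cartesian closedness of $\mathbf{Coalg}_{K,coc}$---whereas you propose to unpack these steps explicitly; but the underlying argument is the same.
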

\begin{proof}
This follows from Theorem 4.4 in \cite{BJK}, the exactness of $\mathbf{Hopf}_{K,coc}$ (Theorem \ref{semi-abelian}), and the fact that $\mathbf{Hopf}_{K,coc}$ can be viewed as the category of internal groups in the cartesian closed category of cocommutative $K$-coalgebras (see 2.3 in \cite{Grunenfelder}).
\end{proof}

\section*{Acknowledgement}
The authors  are very grateful to Clemens Berger and Stephen Lack for many useful suggestions on the subject of the article.

\section*{}
\begin{flushleft}
\begin{small}
\textbf{Funding} : The second author is funded by a grant from the National Fund for Scientific Research of Belgium.
\end{small}
\end{flushleft}

\bibliography{biblio}
\bibliographystyle{spmpsci}     

\end{document}